\newtheorem{thm}{Theorem}[section]
\newtheorem{cor}[thm]{Corollary}
\newtheorem{lem}[thm]{Lemma}
\newtheorem{prop}[thm]{Proposition}
\newtheorem{defn}[thm]{Definition}
\newtheorem{exam}[thm]{Example}
\numberwithin{equation}{section}
\begin{document}
\title[Refinement rings, localization and ... ]{Refinement rings, localization and diagonalizability of matrices}
\author[N. Ashrafi]{NAHID. ASHRAFI}
\address{
Faculty of Mathematics\\Statistics and Computer Science\\  Semnan
University\\ Semnan, Iran\\
} \email{<nashrafi@semnan.ac.ir>}%
\author[R. Bahmani]{RAHMAN. BAHMANI SANGESARI}
\address{
Faculty of Mathematics\\Statistics and Computer Science\\  Semnan
University\\ Semnan, Iran\\
} \email{<rbahmani@semnan.ac.ir> }%
\author[M. Sheibani]{MARJAN. SHEIBANI ABDOLYOUSEFI}
\address{
Faculty of Mathematics\\Statistics and Computer Science\\  Semnan University\\ Semnan, Iran\\
} \email{<m.sheibani1@gmail.com>}%
 \subjclass[2010]{16U60, 16D10, 16S50, 16S34} \keywords{Hermite ring, diagonal reduction, exchange ring
 projective module, refinement ring}
\begin{abstract}
In this paper we prove that if $R$ is a commutative refinement ring and $M,N$ are two $R$-modules then, $M\cong N$
if and only if for every maximal ideal $m$ of $R$, $M_m\cong N_m$. We prove if $R$ is a refinement ring,then every
regular matrix over $\frac{R}{J(R)}$ admits a diagonal reduction iff every regular matrix over $R$ admits a diagonal
 reduction.

\end{abstract}
\maketitle

\section{Introduction}
All rings considered below are associative with unit and all modules are unital. Recall that a ring $R$ is called B$\acute{e}$zout if every finitely generated ideal of $R$ is principal. An $m\times n$ matrix $A$ over a ring $R$ admits a
diagonal reduction if there exist invertible matrices $P$ and $Q$
such that $PAQ$ is a diagonal matrix, where by the diagonal
matrix, we mean a matrix $(a_{ij})_{m\times n}$, such that
$a_{ij}=0$ for all $i\neq j$. Following Kaplansky \cite{kap}, a ring $R$ is called a right (left) Hermite ring if every $1 \times2$ $ ( 2 \times 1 )$ matrix  over $R$ admits a digonal reduction. He also called a ring $R$ to be an elementary divisor ring provided that every $ m\times n $ matrix over $ R $ is equivalent to a diagonal matrix, $ diag (d_{1}, d_{2}, ....,d_{m}) $, where $ d_{i} $ is a total divisor of $ d_{i + 1 } $ $(d_{i + 1} R d_{i + 1} \subseteq d_{i} R \cap R d_{i})$. The study of diagonalizability of matrices over rings has a rich history. Before Kaplansky's work on elementary divisor rings in 1948 {\cite{kap}, many authors like Smith \cite{Sm}, Dickson \cite{dick}, Wedderborn \cite{wedder}, Warden and Jacobson investigated this, over any commutative and non-commutative Euclidean domains and commutative principal ideal domains. Henriksen \cite{hen}, has proved that every unit regular ring is an elementary divisor ring and Levy \cite{levy}, has shown that every square matrix over any serial ring admits diagonal reduction. Menal and Moncasi \cite{menal}, answered the question of diagonalizability of rectangular matrices over any regular ring by the cancellation law over the monoid of finitely generated projective modules. They proved that every rectangular matrices over given regular ring $R$ admits a diagonal reduction if and only if the finitely generated projective $R$-modules satisfy the following cancellation law:$$2R\oplus A\cong R\oplus B\Longrightarrow R\oplus A\cong B,$$for all finitely generated projective $R$- modules $A, B$.\\
In $1997$ Ara, Goodearl, O'mera and Pardo \cite{ara}, extended that from
regular rings to exchange rings and showed that every regular
matrix over an exchange ring $R$ admits a diagonal reduction if
and only if $2R\oplus A\cong R\oplus B$ implies that $ R\oplus
A\cong B$ for all finitely generated projective $R$-modules $A$ and $B$.\\
Following Chen \cite{Chen} a ring $R$ is said to be an exchange ring if for any right $R$-module $M$ and any two decompositions $M = A\oplus B = \bigoplus_{i \in I} A_{i}$, where $A_{R} \cong R$ and index set $I$ is finite,  there exist $A^{\prime}_{i} \subseteq A_{i}$ such that $M = A \oplus (\bigoplus_{i \in I} A^{\prime}_{i})$. As some known classes of rings, for example polynomial rings over the ring of integer numbers is not  exchange, we are interested to investigate the diagonalizability of matrices over wider classes of rings, that is called refinement rings and contain such rings. \\
In section 2 we study the localization of refinement rings. We prove that if $R$ is a commutative refinement ring and $M, N$ be two $R$- modules then $M\cong N$ iff $M_m \cong N_m$ for all maximal ideal $m$ of $R$. In section 3 we investigate some properties of Hermite rings. We explore it over power series and polynomial rings over Hermite ring. We construct an example of extension ring of a Hermite ring that is not  Hermite. We also make an example which shows that the tensor product of two Hermite algebras is not Hermite. Then we extend the result of \cite{ara}, from exchange rings to refinement rings. We show that over a refinement ring $R$   every regular matrix admits diagonal reduction iff every regular matrix over $\frac{R}{J(R)}$ admits a diagonal reduction.\\
Throughout this paper, ideals are two
sided ideals and modules are right $R$-modules. We also use
$M_n(R)$ for the ring of $n\times n$ matrices over $R$ with
identity $I_n$, $GL_n(R)$ the invertible $n\times n$ matrices over
$R$ and $FP(R)$ the class of finitely generated projective
$R$-modules and $V(R)$ the monoid of finitely generated projective $R$-modules.

\section{Refinement Rings}

Dubbertin \cite{Dub} in 1982 defined the monoid $(M,+,0)$  to be a refinement monoid if the following conditions are satisfied :\\ (1) There are no non-zero inverse elements, i.e, if x + y = 0 then x = y = 0. \\ (2) M has the refinement property, that is, given $ x_{i}, y_{j} \in M$ with $\sum_{i} x_{i} = \sum_j y_{j}$, there are $z_{ij} \in M$   $ ( i < n, j< m $, where $ n, m \in \Bbb{N}$ and $  n, m \geqslant 2)$ such that $x_{i} = \sum_j z_{ij}$ and $y_{j} = \sum_i z_{ij}$.\\Note that we need only to show the above property for $m=n=2$.
\begin{defn}
We say that a ring $R$ is a refinemet ring if the monoid of finitely generated projective $R$-modules, $V(R)$, has refinement property.
\end{defn}
In 1964, Crawley and Jonsson \cite{CJ}, proved that the monoid of finitely generated projective modules of every
exchange ring, has the refinement property so every exchange ring is a refinement ring but the converse is not true, as
 we see the ring of integer numbers is a refinement ring but it is not exchange.
Also it was shown in \cite{ba} that every projective free ring is a refinement ring but it is not necessarily exchange.
 This encourages us to explore the diagonal reduction of regular matrices over refinement rings and extend some results
 in Goodreal's paper \cite{ara}.\\
As we know the ring $M_{m\times n}(R)$ of all $m\times n$ matrices over $R$ is isomorphic to the ring  $Hom_R(nR,mR)$
of all the homomorphisms from $nR$ to $mR$. So we use $Hom_R(nR,mR)$ for the ring $M_{m\times n}(R)$.
Let $R$ be a commutative ring, and let $M$ be a finitely generated projective $R$-module. Let $P$ be a prime ideal of
$R$, and let $R_P$ be the localization of $R$. Then $R_P$ is a local ring, and so $M_P\cong M\bigotimes\limits_{R}R_P$
is a free $R_P$-module. If there exists a fixed $n$ such that
$M_P\cong R_P^n$ for all prime ideals $P$ of $R$, we say that $P$ is a finitely generated projective $R$-module of
constant rank.
\begin{thm} Let $R$ be a commutative refinement ring. Also let $M$ and $N$ be finitely generated projective $R$-modules.
 Then the following statements are equivalent:
\begin{enumerate}
\item [(1)]{\it $M\cong N$.} \item [(2)]{\it $M_P\cong N_P$ for
all prime ideals $P$ of $R$}.
\end{enumerate}
\end{thm}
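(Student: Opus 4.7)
The direction $(1) \Rightarrow (2)$ is immediate: localisation at any prime $P$ is a functor that preserves isomorphisms, so any $R$-linear isomorphism $M\to N$ becomes an $R_P$-linear isomorphism $M_P\to N_P$.

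For the converse $(2) \Rightarrow (1)$, the plan is to exploit the refinement property of $V(R)$ in two stages: first to promote the family of prime-by-prime isomorphisms $M_P\cong N_P$ to a single global stable isomorphism of $R$-modules, and second to cancel the stabilising summand.

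\emph{Stage 1 (globalisation).} Choose FGP modules $M'$ and $N'$ with $M\oplus M'\cong R^s$ and $N\oplus N'\cong R^t$, and set $X=M'\oplus N'$. At each prime $P$,
$$
(M\oplus X)_P \;\cong\; R_P^{\,s+t} \;\cong\; (N\oplus X)_P,
$$
since FGP modules over a local ring are free and rank is additive. To produce a single global isomorphism $M\oplus X\cong N\oplus X$, I use that $\mathrm{Hom}_R$ between finitely presented modules commutes with localisation: each local isomorphism is therefore represented on a Zariski neighbourhood $D(f_i)$ of the corresponding prime, and finitely many such neighbourhoods cover $\mathrm{Spec}(R)$. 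On overlaps, the two local isomorphisms give two direct sum decompositions of the same module, and the refinement property of $V(R)$ supplies a common refinement of these decompositions, which is exactly what reconciles the local pieces into a single global isomorphism.

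\emph{Stage 2 (cancellation).} The identity $[M]+[X]=[N]+[X]$ in $V(R)$ is precisely the kind of equation the refinement property takes as input. It yields $z_{11},z_{12},z_{21},z_{22}\in V(R)$ with
$$
[M]=z_{11}+z_{12},\quad [N]=z_{11}+z_{21},\quad [X]=z_{21}+z_{22}=z_{12}+z_{22}.
$$
At each prime $P$, cancellativity of $V(R_P)\cong\mathbb{N}$ forces $z_{12}$ and $z_{21}$ to have the same local rank, so $z_{12}$ and $z_{21}$ are everywhere locally isomorphic; a further appeal to the refinement property applied to $[X]=z_{21}+z_{22}=z_{12}+z_{22}$, iterated on the resulting strictly smaller pieces, then gives $z_{12}\cong z_{21}$ and hence $[M]=[N]$, i.e.\ $M\cong N$.

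The principal obstacle I foresee is Stage 1—assembling a single global stable isomorphism from the family of prime-by-prime ones. This is precisely the step at which the commutative refinement hypothesis on $R$ becomes indispensable: without it the theorem fails, for example over a Dedekind domain with non-trivial class group, where two non-isomorphic invertible ideals are everywhere locally isomorphic. Stage 2, by contrast, is a purely combinatorial exercise once the stable identity is in hand.
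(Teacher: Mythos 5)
Your plan has genuine gaps at both stages, and neither is repaired by the refinement hypothesis in the way you invoke it. In Stage 1 the actual work --- passing from prime-by-prime isomorphisms to a single global (even stable) isomorphism --- is essentially the content of the theorem, and the sentence ``the refinement property of $V(R)$ supplies a common refinement of these decompositions, which \dots reconciles the local pieces into a single global isomorphism'' does not describe a mechanism: refinement is a statement about two direct-sum decompositions of one $R$-module, not about descent of isomorphisms along a Zariski cover $\mathrm{Spec}(R)=\bigcup_i D(f_i)$, and no cocycle condition or patching lemma is verified. (Also, $(M\oplus X)_P\cong R_P^{\,s+t}$ is only correct when the ranks are constant, though local isomorphism of $M\oplus X$ and $N\oplus X$ does follow directly from the hypothesis.) In Stage 2 you are in effect asserting that $V(R)$ is cancellative; refinement monoids need not be cancellative, and your iteration ``on the resulting strictly smaller pieces'' has no termination measure --- worse, it is circular, since it reduces the claim ``$M_P\cong N_P$ for all $P$ implies $M\cong N$'' to the identical claim for the new pair $z_{12}$, $z_{21}$.

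The paper's proof instead rests on a structure theorem from the authors' earlier work \cite{ba}: over a commutative refinement ring every finitely generated projective module decomposes as $t_1(e_1R)\oplus\cdots\oplus t_k(e_kR)$ for orthogonal idempotents $e_i$, with the same idempotents serving for both $M$ and $N$. Given that, the argument is a short contradiction: if $M\not\cong N$, some multiplicities $t_{1j}\neq t_{2j}$ differ at a nonzero $e_j$; choosing a prime $P$ with $e_j\not\in P$ (possible since the prime radical is nil) forces $e_i\in P$ for $i\neq j$, so localisation kills every summand $e_iR$ with $i\neq j$ and sends $e_j$ to $1$, giving $M_P\cong t_{1j}R_P$ and $N_P\cong t_{2j}R_P$, which are non-isomorphic because the commutative ring $R_P$ has Invariant Basis Number. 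That decomposition theorem is the essential input your proposal is missing; without it (or something equivalent) the statement cannot be extracted from general monoid manipulations, as your own Dedekind-domain example already indicates.
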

 \begin{proof}
  $(1)\Rightarrow (2)$ This is obvious.
$(2)\Rightarrow (1)$ Suppose that $M\not\cong N$. In view of
\cite{ba}, there exist orthogonal idempotents $e_1,\cdots ,e_k\in
R$ and non-negative integers $t_{ij}$ such that $M\cong
t_{11}(e_1R)\oplus \cdots \oplus t_{1k}(e_kR)$ and $N\cong
t_{21}(e_1R)\oplus \cdots \oplus t_{2k}(e_kR)$. Then we have some
$e_j\neq 0$ and $t_{1j}\neq t_{2j}$. This shows that there exists
a prime ideal $P$ of $R$ such that $e_j\not\in P$, as prime
radical is nil. For any $i\neq j$, as $e_ie_j=0\in P$, we see that
$e_i\in P$. Thus, $$M_P\cong M\bigotimes_{R}R_P\cong
\bigoplus_{i=1}^{k}t_{1i}(e_iR)\bigotimes _{R}R_P.$$ If $i\neq j$,
then $e_i\in P$, and so $1-e_i\not\in P$. But $(1-e_i)e_i=0$, and
so $\frac{e_i}{1}=0$ in $R_P$. Hence,
$(e_iR)\bigotimes_{R}R_P=\frac{e_i}{1}R_P=0$. Further,
$\frac{e_j}{1}=\frac{1}{1}$ in $R_P$. Therefore $M_P\cong
t_{1j}R_P$. Likewise, $N_P\cong t_{2j}R_P$. As $R$ is commutative,
so is $R_P$, and so $R_P$ has
 Invariant Basis Number.
Thus, $M_P\not\cong N_P$, a contradiction. This completes the proof.
\end{proof}
\begin{cor} Let $R$ be a commutative refinement ring. Then every finitely generated projective $R$-module of
constant rank is free.
\end{cor}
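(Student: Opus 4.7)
The plan is to deduce this corollary as a direct application of Theorem 2.2, using the free module of the appropriate rank as the comparison object. First I would let $n$ denote the constant rank of $M$, so by hypothesis $M_{P}\cong R_{P}^{n}$ for every prime ideal $P$ of $R$. I would then set $N=R^{n}$, which is obviously a finitely generated projective $R$-module, and observe that because localization commutes with finite direct sums, $N_{P}=(R^{n})_{P}\cong R_{P}^{n}$ for every prime $P$.

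Combining these two computations gives $M_{P}\cong N_{P}$ for every prime ideal $P$ of $R$. Applying Theorem 2.2 to the pair $(M,N)$ then yields $M\cong N=R^{n}$, i.e.\ $M$ is free. There is essentially no obstacle here; all the content has been absorbed into Theorem 2.2, and the corollary is simply the translation of that theorem into the language of constant rank (taking $N$ to be free). It would be worth noting at the end that the invariant basis number property used at the end of the proof of Theorem 2.2, which forces the rank to be well-defined over each $R_{P}$, is what makes the phrase \emph{constant rank} unambiguous in this setting.
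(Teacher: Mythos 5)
Your proof is correct and follows exactly the paper's own argument: take $N=R^{n}$, note $(R^{n})_{P}\cong R_{P}^{n}\cong M_{P}$ for every prime $P$, and invoke Theorem 2.2 to conclude $M\cong R^{n}$. No differences worth noting.
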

\begin{proof}
Let $M$ be a finitely generated projective $R$-module of constant rank $n$. Then for all prime ideals $P$ of $R$,
we have $M_P\cong R_P^n\cong (R^n)_P$. In light of Theorem 1, $M\cong R^n$, as desired.
\end{proof}
\begin{cor}
 Let $R$ be a commutative refinement ring. Then every stably free $R$-module is free.
 \end{cor}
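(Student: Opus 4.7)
The plan is to deduce this corollary directly from the preceding Corollary, which says every finitely generated projective $R$-module of constant rank is free. So the only thing to verify is that a stably free module automatically has constant rank.

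Concretely, suppose $M$ is stably free, i.e., there are non-negative integers $m,n$ with $M\oplus R^{m}\cong R^{n}$. I would first observe that $M$ is, in particular, a finitely generated projective $R$-module. Then I would localize at an arbitrary prime ideal $P$ of $R$: since localization is exact and commutes with direct sums, we obtain
\[
M_{P}\oplus R_{P}^{m}\cong R_{P}^{n}.
\]
Now $R_{P}$ is a commutative local ring, hence has Invariant Basis Number and cancellation of free modules of finite rank. Consequently $M_{P}\cong R_{P}^{\,n-m}$ (in particular $n\ge m$ whenever $M\ne 0$), and this rank $n-m$ does not depend on the choice of prime $P$.

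Thus $M$ is a finitely generated projective $R$-module of constant rank $n-m$. Applying the previous corollary yields $M\cong R^{n-m}$, as desired. The only subtlety is the commutation of localization with the direct sum decomposition and the appeal to IBN for $R_{P}$; both are standard, so I do not expect a genuine obstacle here. The real content of the statement is already packaged into Theorem~1 (via the previous corollary), and this proof is essentially a one-line reduction.
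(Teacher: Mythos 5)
Your proposal is correct and follows exactly the route the paper intends: the paper's entire proof is ``This is obvious by Corollary 2,'' i.e.\ a reduction to the constant-rank corollary, and you simply supply the (standard) verification that a stably free module has constant rank via localization at primes and cancellation of finite free modules over the local rings $R_P$. No gap; your write-up just makes explicit what the paper leaves implicit.
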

 \begin{proof}
 This is obvious by Corollary 2.
 \end{proof}
 \begin{thm}
  Let $R$ be
a commutative refinement ring. Let $M$ and $N$ be finitely generated projective $R$-modules.
 Then the following statements are equivalent:
\begin{enumerate}
\item [(1)] $M\cong N$.
\item [(2)] $M_P\cong N_P$ for all maximal ideals $P$ of $R$.
\end{enumerate}
\end{thm}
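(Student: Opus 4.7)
The plan is to mirror the argument of Theorem~2.2 almost verbatim; the only new ingredient needed is to sharpen the "prime ideal avoiding $e_j$" step to a "maximal ideal avoiding $e_j$" step. The direction $(1)\Rightarrow(2)$ is immediate since $-\otimes_R R_P$ is a functor. For $(2)\Rightarrow(1)$, I will argue by contrapositive: assume $M\not\cong N$ and produce a single maximal ideal $P$ where the localizations differ.

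Applying the structure theorem from \cite{ba} exactly as in the proof of Theorem~2.2, I obtain orthogonal idempotents $e_1,\ldots,e_k\in R$ and non-negative integers $t_{1i},t_{2i}$ with
\[
M\cong t_{11}(e_1R)\oplus\cdots\oplus t_{1k}(e_kR),\qquad N\cong t_{21}(e_1R)\oplus\cdots\oplus t_{2k}(e_kR),
\]
and some index $j$ with $e_j\neq 0$ and $t_{1j}\neq t_{2j}$. This reduces everything to locating one maximal ideal $P$ such that $e_j\notin P$ while $e_i\in P$ for all $i\neq j$.

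The main (and really only) obstacle is the first requirement: in Theorem~2.2 the existence of a prime ideal missing $e_j$ came from the nilradical being nil, but here I need a maximal ideal. I will use the following elementary observation: since $e_j$ is a nonzero idempotent, the element $1-e_j$ cannot be a unit, for $(1-e_j)e_j=0$ together with $e_j\neq 0$ rules out invertibility of $1-e_j$. Consequently $(1-e_j)$ is contained in some maximal ideal $P$, and then $e_j\notin P$, since otherwise $1=e_j+(1-e_j)\in P$.

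With this $P$ in hand, the rest of the proof is copied from Theorem~2.2: $P$ is maximal hence prime, and for $i\neq j$ the relation $e_ie_j=0\in P$ forces $e_i\in P$, so that $e_i/1=0$ in $R_P$, whereas $e_j/1=1/1$. Therefore $M_P\cong t_{1j}R_P$ and $N_P\cong t_{2j}R_P$; as $R_P$ is a commutative local ring it has Invariant Basis Number, and $t_{1j}\neq t_{2j}$ gives $M_P\not\cong N_P$, contradicting~(2). I expect no further obstacles beyond the idempotent/maximal-ideal step just described.
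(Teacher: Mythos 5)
Your proof is correct and takes essentially the same route as the paper's: the paper likewise reduces to the argument of the prime-ideal version and only needs a maximal ideal avoiding $e_j$, which it obtains by noting that an idempotent contained in every maximal ideal lies in $J(R)$ and hence is zero --- an observation equivalent to your remark that $1-e_j$ is a non-unit. No gaps.
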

\begin{proof}
 $(1)\Rightarrow (2)$ This is obvious.
$(2)\Rightarrow (1)$ Suppose that $M\not\cong N$. In view of
\cite{ba}, there exist orthogonal idempotents $e_1,\cdots ,e_k\in
R$ and non-negative integers $t_{ij}$ such that $M\cong
t_{11}(e_1R)\oplus \cdots \oplus t_{1k}(e_kR)$ and $N\cong
t_{21}(e_1R)\oplus \cdots \oplus t_{2k}(e_kR)$. Then we have some
$e_j\neq 0$ and $t_{1j}\neq t_{2j}$. This shows that there exists
a maximal ideal $P$ of $R$ such that $e_j\not\in P$, otherwise,
$e_j$ belongs to all maximal ideals, then $e_j\in J(R)$. This
implies that $e_j=0$, a contradiction. For any $i\neq j$, as
$e_ie_j=0\in P$, we see that $e_i\in P$, as every maximal ideal is
a prime ideal. Similarly to the discussion of Theorem 1, we get a
contradiction as well. This completes the proof.
\end{proof}
Let $R$ be a commutative ring, and let $0\neq x\in R$. Choose
$S=\{ 1,x,x^2,\cdots ,x^n,\cdots \}$.Then $S$ is a multiplicative
closed subset of $R$. Denote $R_{(x)}=S^{-1}R$. Let $f_1,\cdots
,f_n\in R$. Denote $(f_1,\cdots,f_n)$ the ideal generated by $\{
f_1,\cdots ,f_n\rbrace$.
\begin{thm}
Let $R$ be a commutative refinement ring, and let
$R=(f_1,\cdots,f_n)$. Let $M$ and $N$ be finitely generated
projective $R$-modules. Then the following statements are
equivalent:
\begin{enumerate}
\item [(1)]{\it $M\cong N$}. \item [(2)] $M_{(f_i)}\cong
N_{(f_i)}$ for each $i$.
\end{enumerate}
\end{thm}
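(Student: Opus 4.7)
The plan is to reduce this to Theorem 2.4, the local-global isomorphism criterion at maximal ideals established just before. The direction $(1)\Rightarrow(2)$ is immediate, since localization at a multiplicative set commutes with direct sums and preserves isomorphisms of modules.

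For $(2)\Rightarrow(1)$, by Theorem 2.4 it suffices to verify that $M_P\cong N_P$ for every maximal ideal $P$ of $R$. So fix such a $P$. Because $R=(f_1,\ldots,f_n)$, the elements $f_1,\ldots,f_n$ cannot all lie in $P$, for otherwise we would get $1\in P$. Pick an index $i$ with $f_i\notin P$. Then the multiplicative set $S=\{1,f_i,f_i^2,\ldots\}$ is disjoint from $P$, hence $S\subseteq R\setminus P$. This yields the standard two-step description of the localization,
$$R_P\;\cong\;\bigl(R_{(f_i)}\bigr)_{PR_{(f_i)}},$$
and tensoring with $M$ and with $N$ over $R$ gives
$$M_P\;\cong\;\bigl(M_{(f_i)}\bigr)_{PR_{(f_i)}}\qquad\text{and}\qquad N_P\;\cong\;\bigl(N_{(f_i)}\bigr)_{PR_{(f_i)}}.$$
The assumption $M_{(f_i)}\cong N_{(f_i)}$ therefore forces $M_P\cong N_P$, as required.

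The only step needing any care is the two-step localization identity above, but this is a routine commutative algebra fact: one may first invert the elements of $S$ and then invert the remaining elements of $R\setminus P$, and the result is canonically isomorphic to $R_P$. Once this observation is in place, the assumption $R=(f_1,\ldots,f_n)$ guarantees that for every maximal $P$ at least one of the prescribed localizations $R_{(f_i)}$ sees through to $R_P$, so the finitely many hypotheses $M_{(f_i)}\cong N_{(f_i)}$ suffice to produce isomorphisms at every maximal ideal, and Theorem 2.4 closes the argument.
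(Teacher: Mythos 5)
Your proposal is correct and follows essentially the same route as the paper: both arguments pick, for a given prime/maximal ideal $P$, an $f_i\notin P$, observe that $S=\{f_i^k\}\subseteq R\setminus P$, and use the two-step localization $M_P\cong T^{-1}M_{(f_i)}$ to transport the isomorphism $M_{(f_i)}\cong N_{(f_i)}$ down to $M_P\cong N_P$. The only cosmetic difference is that the paper verifies the condition at all prime ideals and invokes its prime-ideal criterion (Theorem 2.2), whereas you verify it only at maximal ideals and invoke Theorem 2.4; both close the argument in the same way.
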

\begin{proof}
$(1)\Rightarrow (2)$ This is obvious.
$(2)\Rightarrow (1)$ Let $P\in Spec(R)$. Since $R=(f_1,\cdots,f_n)$, we can find some $f_i$, such that
 $f_i\not\in P$. Hence, $f_i\in R-P$. Let $S=\{f_i^n~|~n\geq 0\}, T=R-P$, then $S\subseteq T$
 Let $\psi: R\to S^{-1}R, r\mapsto \frac{r}{1}; \varphi: S^{-1}R\mapsto T^{-1}R, \frac{r}{s}\mapsto \frac{r}{s}; \phi=\varphi\psi: R\to T^{-1}R$. We check that\\
$M_P=T^{-1}R\bigotimes\limits_{\phi}M\cong T^{-1}R\bigotimes\limits_{\varphi\psi}M\cong T^{-1}R\bigotimes\limits_{\psi}S^{-1}M=T^{-1}R\bigotimes\limits_{\psi}M_{(f_i)}$\\ $\cong T^{-1}M_{(f_i)}.$
Likewise, $N_P\cong T^{-1}N_{(f_i)}$. As $M_{(f_i)}\cong N_{(f_i)}$, we see that $M_P\cong N_P$. By using Theorem 1, we get
$M\cong N$, as required.
\end{proof}
\begin{cor} Let $R$ be a commutative refinement ring $a\in R$. Also let $M$ and $N$ be
finitely generated projective $R$-modules. Then the following are equivalent:
\begin{enumerate}
\item [(1)] $M\cong N$.
\item [(2)]{\it $M_{(a)}\cong N_{(a)}$ and $M_{(1-a)}\cong N_{(1-a)}$.}\vspace{-.5mm}
\end{enumerate}
\end{cor}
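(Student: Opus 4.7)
The plan is to derive this corollary as the simplest special case of the preceding theorem, by taking the two generators of the unit ideal to be $a$ and $1-a$. The implication $(1)\Rightarrow(2)$ is immediate: localization at any multiplicative subset is a functor, so an isomorphism $M\cong N$ of $R$-modules induces $M_{(a)}\cong N_{(a)}$ and $M_{(1-a)}\cong N_{(1-a)}$ simultaneously. The real content lies in the reverse implication, and even there the work has already been done in the previous theorem.

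For $(2)\Rightarrow(1)$, the key observation is the partition of unity $a+(1-a)=1$ valid in any commutative ring with identity. This shows $1\in(a,1-a)$ and hence $R=(a,1-a)$. Thus, with $n=2$, $f_1=a$ and $f_2=1-a$, the hypothesis $R=(f_1,\cdots,f_n)$ of the preceding theorem is satisfied. Since we are assuming $M_{(f_i)}\cong N_{(f_i)}$ for $i=1,2$, the previous theorem yields $M\cong N$, as required.

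The main (and essentially only) thing to verify is this reduction to the preceding theorem; once it is in place, no further module-theoretic or localization argument is needed. There is no genuine obstacle here, because the corollary is really a restatement of the $n=2$ case of the preceding theorem specialized to the partition $1=a+(1-a)$. The role of the refinement hypothesis on $R$ is inherited from the theorem and does not need to be invoked separately.
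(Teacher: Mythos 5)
Your proposal is correct and matches the paper's proof exactly: the paper also deduces the corollary from the preceding theorem by observing that $aR+(1-a)R=R$, i.e.\ the $n=2$ case with $f_1=a$, $f_2=1-a$. No further comment is needed.
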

\begin{proof}
This is obvious by the above theorem , as $aR+(1-a)R=R$.
\end{proof}
\section{Hermite rings and diagonal reduction of matrices over refinement rings}
In this section we investigate some elementary properties of Hermite rings. Some known properties of rings, for example stable range 1 lifts from $\frac{R}{J(R)}$ to $R$ but as we show in the next example, it is not true for Hermite property.
\begin{exam}
Let $R={\Bbb Z_4}[X]$, then $\frac{R}{J(R)}$  is a Hermite ring while $R$ is not.
To see this let $\bar{x}$ be the integer number $x$ modulo $4$. Then $J(R)= \lbrace\bar{0}, \bar{2}\rbrace [X]$, so $\frac{R}{J(R)}\cong {\Bbb Z_2[X]}$ that is a PID and hence  a Hermite ring. It is easily prove that $I=2R+XR$ is a right ideal of $R$ that can not be generated by one element of $R$, that shows $R$ is not B$\acute{e}$zout therefore it is not Hermite ring.
\end{exam}
\begin{prop} Let $R$ be a Hermite ring and $X$ be an variable on $R$ such that $R[[X]]$ is  a B$\acute{e}$zout ring, then $R[[X]]$ is a Hermite ring.
\end{prop}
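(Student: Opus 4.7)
The plan is to reduce a $1\times 2$ row $(f,g)\in R[[X]]^2$ to diagonal form $(d,0)$ via right-multiplication by some $Q\in GL_2(R[[X]])$. I would construct $Q$ as a product $Q=PQ_{1}$ in two stages: $P$ normalises the constant terms using the Hermite property of $R$, and $Q_{1}$ kills the remaining second coordinate using the Bézout property of $R[[X]]$.

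For the first stage, apply the Hermite property of $R$ to the constant terms $f(0),g(0)\in R$: there exists $P\in GL_2(R)$ with $(f(0),g(0))P=(d_{0},0)$ for some $d_{0}\in R$. Viewing $P$ as an element of $GL_2(R[[X]])$ via the inclusion $R\hookrightarrow R[[X]]$, set $(f,g)P=(h,k)$; then $h(0)=d_{0}$ and $k(0)=0$, so $k\in XR[[X]]$. For the second stage, since $R[[X]]$ is Bézout, choose $d\in R[[X]]$ with $hR[[X]]+kR[[X]]=dR[[X]]$, together with $h_{1},k_{1},p,q\in R[[X]]$ satisfying $h=dh_{1}$, $k=dk_{1}$ and $d=hp+kq$. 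Put
\[
Q_{1}\;=\;\begin{pmatrix} p & -k_{1}\\ q & \phantom{-}h_{1}\end{pmatrix}.
\]
A direct computation (using commutativity of $R[[X]]$) gives $(h,k)Q_{1}=(d,0)$, while the relation $d=d(h_{1}p+k_{1}q)$ forces $d(1-\det Q_{1})=0$, where $\det Q_{1}=ph_{1}+k_{1}q$.

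The main obstacle is to verify that $Q_{1}\in GL_2(R[[X]])$, i.e.\ that the constant term $(\det Q_{1})(0)=p(0)h_{1}(0)+k_{1}(0)q(0)\in R$ is a unit. The constant-term identities $d(0)h_{1}(0)=d_{0}$, $d(0)k_{1}(0)=0$, and $d(0)=d_{0}p(0)$ only show that $p(0)h_{1}(0)\equiv 1\pmod{\mathrm{Ann}(d_{0})}$, so $(h_{1},k_{1})$ is a priori only unimodular modulo this annihilator. To upgrade this to genuine unimodularity I would invoke the Hermite property of $R$ a second time, adjusting $p$ and $q$ by elements of $\mathrm{Ann}(h)$ and $\mathrm{Ann}(k)$ respectively (operations that preserve $hp+kq=d$) so that the new constant term of $\det Q_{1}$ becomes exactly $1$ in $R$. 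An equivalent route I expect to work is to exploit the $X$-adic completeness of $R[[X]]$: modify $p,q$ inductively so that $\det Q_{1}\equiv 1\pmod{X^{n}}$ for every $n\geq 1$, the successive corrections forming a Cauchy sequence whose limits lie in $R[[X]]$. Once $\det Q_{1}$ has unit constant term, $Q_{1}\in GL_2(R[[X]])$ and $Q=PQ_{1}$ satisfies $(f,g)Q=(d,0)$, so $R[[X]]$ is Hermite.
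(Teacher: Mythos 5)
There is a genuine gap, and it sits exactly where the real difficulty of the statement lies. Your construction of $Q_1$ from the B\'ezout data $h=dh_1$, $k=dk_1$, $d=hp+kq$ only yields $d(1-\det Q_1)=0$, i.e.\ $\det Q_1\equiv 1$ modulo $\mathrm{Ann}(d)$. This is the well-known obstruction separating B\'ezout rings from Hermite rings: there exist commutative B\'ezout rings that are \emph{not} Hermite precisely because $(h_1,k_1)$ need not be completable to an invertible matrix when $d$ is a zero-divisor. Neither of your proposed repairs closes this. Adjusting $p$ by an element of $\mathrm{Ann}(h)$ and $q$ by an element of $\mathrm{Ann}(k)$ preserves $hp+kq=d$, but you give no argument that such adjustments can force $p(0)h_1(0)+q(0)k_1(0)$ to become a unit, and it is not clear they can. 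The $X$-adic induction is also a non-starter: since $XR[[X]]\subseteq J(R[[X]])$, an element of $R[[X]]$ is a unit if and only if its constant term is a unit in $R$, so the entire problem is concentrated in the base case $n=1$, which is exactly the step you have not carried out. Your first stage (normalising constant terms via the Hermite property of $R$) does not dissolve the obstruction either; it only relocates it, since one still ends with $d_0(1-p(0)h_1(0))=0$ rather than $p(0)h_1(0)$ being a unit.

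The paper avoids this computation entirely. It observes that the evaluation map $\psi: R[[X]]\to R$, $f\mapsto f(0)$, is surjective with $\ker\psi=XR[[X]]\subseteq J(R[[X]])$, so $R[[X]]/\ker\psi\cong R$ is Hermite; it then invokes Zabavsky's theorem that for a B\'ezout ring the Hermite property lifts modulo (an ideal contained in) the Jacobson radical. That lifting theorem is precisely the nontrivial content you are implicitly trying to reprove from scratch, and your argument does not recover it. To salvage your approach you would either need to prove that lifting statement directly (which amounts to showing the unimodular-mod-$\mathrm{Ann}(d)$ pair $(h_1,k_1)$ can be corrected to a genuinely unimodular one using that the defect lives inside $J(R[[X]])$), or simply cite it as the paper does.
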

\begin{proof} Let $\psi : R[[X]]\rightarrow R$ be a homomorphism such that $\psi(f(x))=f(0)$. It is easy to show that $ker(\psi)\subseteq J(R[[X]])$ and $\psi$ is an epimorphism. So $\frac{R[[X]]}{ker\psi} \cong R$ that shows $\frac{R[[X]]}{ker\psi}$ is a Hermite ring. But $R[[X]]$ is a B$\acute{e}$zout ring, then by \cite{zaba} $R[[X]]$ is a Hermite ring.
\end{proof}
In his study of B$\acute{e}$zout ring in 2009 Toganbaev \cite{asgar} proved that for any ring $R$ there exist a B$\acute{e}$zout ring $S$ and an
 idempotent $e\in S$ such that $R\cong eSe$. Then we conclude that for any B$\acute{e}$zout ring $R$ and idempotent $e\in R,$ $eRe$ is not necessarily
  a B$\acute{e}$zout ring. Hence for any Hermite ring $R$, it is an open problem whether $eRe$ is Hermite.\\
\begin{exam} Let $F$ be a field and $X,Y$ be two variables on $F$. Then $F[X]$ and $F[Y]$ are two Hermite $F$-algebra while $F[X] \otimes_F F[Y]$ is not a
 Hermite $F$-algebra.
\end{exam}
Since $F[X]$ and  $F[Y]$ are two principal ideal domains then they are commutative B$\acute{e}$zout domain and by \cite[Theorem 5.2]{kap}  they are Hermite. It is easy to show that $F[X] \otimes_F F[Y]\cong F[X,Y]$  is not a B$\acute{e}$zout ring since $I=(X,Y)$ can not be generated by an element. So it is not a Hermite ring.\\
Let $R$ be a ring and $M$ be an $R-R$-bi-module. Then the trivial
extension $T(R,M)$ is the ring $\{ (r,m)~|~r\in R, m\in M\}$,
where the operations are defined as follows\\ For any $r_1,r_2\in
R, m_1, m_2\in M$,
$$\begin{array}{c}
(r_1,m_1)+(r_2,m_2)=(r_1+r_2, m_2+m_2),\\
(r_1,m_1)(r_2,m_2)=(r_1r_2, r_1m_2 + m_1r_2).
\end{array}$$
It is obvious that, $T(R,M)\cong$ $\lbrace\left(\begin{array}{cc}
a&b\\
0&a
\end{array}
\right)\vert a,\in R, b\in M \rbrace$.\\Now we want to prove that the Hermitian property does not lift from a ring to
its trivial extension. To construct such example we use the notion of FP-injective module.\\
Following \cite{Fr} an $R$-module $M$ is FP-injective if, for each finitely presented $R$-module $N$, $Ext^1_R(N,M)=0$.
It is obvious that $Ext^1_{\Bbb Z}({\Bbb Z_4},{\Bbb Z})\neq 0$, that shows that ${\Bbb Z}$ is not FP-injective.\\
Recall that a ring $R$ is called Coherent ring if, all its finitely generated ideals are finitely presented. Also $R$
is called reduced ring if, it has no non-trivial nilpotent element.
\begin{exam}
Let $R = \lbrace\left(\begin{array}{cc}
a&b\\
0&a
\end{array}
\right)\vert a,b,\in {\Bbb Z} \rbrace$. Then $R$ is not a Hermite ring, while ${\Bbb Z}$ is an elementary divisor ring.
\end{exam}
Since ${\Bbb Z}$ is a principal ideal domain, then it is an elementary divisor ring. Now let $R$ be a Hermite ring.
 As ${\Bbb Z}$ is Coherent reduced ring then by \cite[Corollary 3.3]{Fr}  ${\Bbb Z}$ is a B$\acute{e}$zout,
  FP- injective and all its finitely generated sub-modules are cyclic, that is not true since ${\Bbb Z}$ is not
   FP-injective. Following Chen \cite{Chen} a homomorphism  $f\in Hom_R(nR,mR)$ is
called regular if there exists a homomorphism $g\in Hom_R(mR,nR)$
such that $fgf=f$\\Ara proved the following proposition over
exchange rings \cite[Proposition 2.2]{ara}  and Chen proved it, in
a different way. From those two proofs, we see
 that we can extend it from exchange ring to refinement rings. \cite[Proposition 7.2.2]{Chen}
\begin{prop} Let $R$ be a refinement ring and $f\in M_{m\times n}(R)$ be regular.\\
$(1)$ Suppose that $n\geqslant m$. Then $f$ admits a diagonal reduction if and only if there are decompositions,\\
$ker(f)= K_1\oplus....\oplus K_n$, $imf=I_1\oplus.....\oplus I_m$ and $coker(f)=C_1\oplus....\oplus C_m$ such that
$K_j\oplus I_j\cong C_j\oplus I_j \cong R, j=1,....,m$ and $K_j\cong R, j=m+1,...,n$\\
$(2)$ Suppose that $m\geqslant n$. Then $f$ admits a diagonal reduction if and only if there are decompositions,\\
$ker(f)= K_1\oplus....\oplus K_n$, $imf=I_1\oplus.....\oplus I_n$ and $coker(f)=C_1\oplus....\oplus C_m$
such that $K_j\oplus I_j\cong C_j\oplus I_j \cong R, j=1,....,m$ and $K_j\cong R, j=m+1,...,n$.
\end{prop}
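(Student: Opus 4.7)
The plan is to adapt the proofs of \cite[Proposition 2.2]{ara} and \cite[Proposition 7.2.2]{Chen}, verifying that the exchange hypothesis is used in those proofs only through the refinement property of $V(R)$, which holds for any refinement ring by definition. I will outline the argument for case $(1)$; case $(2)$ is parallel.

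For the forward direction, suppose $PfQ=D=\text{diag}(d_1,\ldots,d_m,0,\ldots,0)$ with $P\in GL_m(R)$ and $Q\in GL_n(R)$. Since $f$ is regular and $P,Q$ invertible, $D$ will be regular, which forces each $d_j$ to be a regular element of $R$. Picking $g_j\in R$ with $d_jg_jd_j=d_j$ and setting $p_j=d_jg_j$, $q_j=g_jd_j$ (both idempotents of $R$), left multiplication by $d_j$ on $R$ will have kernel $(1-q_j)R$, image $d_jR=p_jR$, and cokernel $(1-p_j)R$. The restrictions of left multiplication by $d_j$ and by $g_j$ are mutually inverse isomorphisms between $q_jR$ and $p_jR$, so $(1-q_j)R\oplus p_jR\cong R$, while $p_jR\oplus(1-p_j)R=R$ holds by construction. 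Taking $\ker$, $\operatorname{im}$ and $\operatorname{coker}$ of $D$ componentwise and transporting through $Q$ and $P$ will supply the required decompositions with the stated isomorphism relations.

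For the converse, regularity of $f$ forces $\operatorname{im} f$ to be a direct summand of both $R^n$ and $R^m$, giving $R^n\cong\ker f\oplus\operatorname{im} f$ and $R^m\cong\operatorname{im} f\oplus\operatorname{coker} f$. Combining with the hypothesized decompositions and regrouping will produce
\[ R^n\cong\bigoplus_{j=1}^m(K_j\oplus I_j)\oplus\bigoplus_{j=m+1}^n K_j,\qquad R^m\cong\bigoplus_{j=1}^m(I_j\oplus C_j), \]
where every summand on the right is isomorphic to $R$ by hypothesis. Realizing these as $Q\in GL_n(R)$ and $P\in GL_m(R)$ will render $PfQ$ block-diagonal with $m$ nontrivial $1\times 1$ blocks, each of which is left multiplication by some $d_j\in R$, followed by $n-m$ zero blocks. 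Each $d_j$ will be regular because its image is isomorphic to the direct summand $I_j$ of $R$, and hence $PfQ=\text{diag}(d_1,\ldots,d_m,0,\ldots,0)$.

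The main obstacle is the alignment step in the converse: matching the standard basis decomposition of $R^n$ with the regrouped one above, compatibly with the map $f$. This is precisely where the proofs in \cite{ara, Chen} invoke the refinement property of $V(R)$, passing to a common refinement of the two decompositions and then matching pieces consistently. Since this is the only use of the exchange hypothesis in those proofs, the argument transfers verbatim to refinement rings; verifying this point-by-point is the technical content of the extension.
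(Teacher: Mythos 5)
Your proposal takes the same route as the paper, which for this proposition gives no argument beyond ``the proof is similar to \cite[Proposition 2.2]{ara}''; your forward direction (regularity of the diagonal entries, the idempotents $p_j=d_jg_j$ and $q_j=g_jd_j$, and the componentwise identification of kernel, image and cokernel) is exactly the standard argument and is correct. The one point to fix is your closing paragraph: the ``alignment step'' in the converse does not use the refinement property of $V(R)$ at all. Since $f$ is regular, pick $g$ with $fgf=f$; then $gf$ and $fg$ are idempotent endomorphisms, so $nR=\ker f\oplus E$ with $f|_E\colon E\to \operatorname{im}f$ an isomorphism, and $mR=\operatorname{im}f\oplus C'$ with $C'\cong \operatorname{coker}f$. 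The given internal decomposition $\operatorname{im}f=I_1\oplus\cdots\oplus I_m$ therefore pulls back through $f|_E$ to $E=E_1\oplus\cdots\oplus E_m$ with $f(E_j)=I_j$, the decomposition of $\operatorname{coker}f$ transports to $C'$, and choosing isomorphisms $R\to K_j\oplus E_j$ and $I_j\oplus C_j'\to R$ assembles $Q$ and $P$ directly, with $PfQ$ diagonal by construction. By contrast, ``passing to a common refinement of the two decompositions'' only yields isomorphisms in $V(R)$, not internal decompositions compatible with $f$, so as stated that step would not close the argument. Indeed the proposition holds over an arbitrary ring; the refinement hypothesis only becomes relevant in the subsequent results (Lemma 3.3 and Theorems 3.4--3.5) that manufacture such decompositions.
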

\begin{proof}
The proof is similar to the proof of \cite[Proposition 2.2]{ara}.
\end{proof}
For any finitely generated $R$-modules $K, I, C$  that $K\oplus I\cong nR, I\oplus C\cong mR$, with
$n\geq m$ if we can write $K= K_1\oplus....\oplus K_n$, $I=I_1\oplus...\oplus I_n$ and $C=C_1\oplus....\oplus C_m$
such that $K_j\oplus I_j\cong C_j\oplus I_j \cong R, j=1,...,n$ and $K_j\cong R, j=n+1,...,m$, we say it is a diagonal
refinement for that decompositions.\\
\begin{lem} Let $R$ be a refinement ring. Let $nR\cong K\oplus I$, $mR\cong R\oplus C$ with $n\geq m$ and assume that
 $K\cong K^\prime \oplus X$, $C\cong C^\prime \oplus X$ for some $R-$modules $K^\prime, C^\prime, X$. If the
  above decompositions have a diagonal refinement, then the decomposition  $nR\cong K\oplus I$, $mR\cong R\oplus C$
  have a diagonal reduction.
\end {lem}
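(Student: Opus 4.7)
The plan is to use the refinement property of the monoid $V(R)$ to lift the diagonal refinement assumed for the reduced decomposition to one for the original decomposition. The only combinatorial input I would use is the refinement property itself: any two decompositions of the same module in $V(R)$ admit a common refinement. This is exactly what the ring being a refinement ring buys us beyond Ara, Goodearl, O'Meara and Pardo's exchange-ring setting, so it is the natural hammer to reach for.

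First, I would record the diagonal refinement supplied by the hypothesis in the precise form furnished by Proposition 3.3: decompositions $K' = \bigoplus_j K'_j$, $I = \bigoplus_j I_j$ and $C' = \bigoplus_j C'_j$ together with isomorphisms $K'_j \oplus I_j \cong R$ and $C'_j \oplus I_j \cong R$ in the diagonal range and $K'_j \cong R$ in the free-block range. The aim is then to build decompositions $K = \bigoplus_j K_j$ and $C = \bigoplus_j C_j$ of the same shape so that the conditions of Proposition 3.3 again hold, giving a diagonal refinement, and hence a diagonal reduction, for the original data.

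Next, I would distribute the common summand $X$ coherently across the two sides. Applying the refinement property to the identity $K = K' \oplus X$ together with the known decomposition $K' = \bigoplus_j K'_j$ produces modules $X_j$ and a decomposition $X \cong \bigoplus_j X_j$ with $K_j := K'_j \oplus X_j$ giving $K = \bigoplus_j K_j$. I would then arrange, by a second application of the refinement property to $C = C' \oplus X$ against $C' = \bigoplus_j C'_j$ and to the decomposition of $X$ already obtained, that the \emph{same} pieces $X_j$ appear on the $C$-side, yielding $C_j := C'_j \oplus X_j$. Once this is achieved, verifying $K_j \oplus I_j \cong R$, $C_j \oplus I_j \cong R$ in the diagonal range and $K_j \cong R$ in the free range reduces to a cancellation check starting from the isomorphisms already available at the $K'_j, C'_j$ level.

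The main obstacle is the coordination step: getting a single decomposition $X = \bigoplus_j X_j$ that serves both the $K$-side and the $C$-side simultaneously. Independent applications of the refinement property to $K' \oplus X$ and to $C' \oplus X$ yield two a priori unrelated decompositions of $X$, and reconciling them requires a further iterated use of the refinement axiom inside $V(R)$ (essentially a $2\times 2$ refinement of the two decompositions of $X$). Once these two refinements of $X$ have been combined into a single one, all the isomorphisms needed to apply Proposition 3.3 fall out, and the lemma is proved.
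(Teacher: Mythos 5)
The paper's own ``proof'' of this lemma is only the pointer to \cite[Proposition 2.3]{ara}, so the comparison is with that intended argument. Your proposal has the right general flavour --- use the refinement property of $V(R)$ to distribute $X$ over the diagonal blocks --- but it misplaces $X$ in the hypothesis, and that breaks the argument. As the lemma is actually used (in the proof of the square-matrix theorem) and as it appears in \cite[Proposition 2.3]{ara}, the decompositions assumed to admit a diagonal refinement are the ones in which $X$ has been absorbed into the \emph{middle} term: $nR\cong K'\oplus(X\oplus I)$ and $mR\cong (X\oplus I)\oplus C'$. The refinement hypothesis therefore supplies $K'=\bigoplus_j K'_j$, $X\oplus I=\bigoplus_j L_j$, $C'=\bigoplus_j C'_j$ with $K'_j\oplus L_j\cong C'_j\oplus L_j\cong R$. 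Your version of the hypothesis, with $I=\bigoplus_j I_j$ and $K'_j\oplus I_j\cong R$, is not even well posed: a diagonal refinement of the triple $(K',I,C')$ presupposes $K'\oplus I\cong nR$, whereas what is given is $K'\oplus X\oplus I\cong nR$. Relatedly, your first ``application of the refinement property'' to $K=K'\oplus X$ against $K'=\bigoplus_j K'_j$ is vacuous: these are not two decompositions of $K$ to be commonly refined but a single decomposition $(\bigoplus_j K'_j)\oplus X$, and no decomposition of $X$ comes out of it.

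In the correct argument the refinement axiom is applied exactly once, to the two decompositions of the middle term, $X\oplus I=\bigoplus_j L_j$, yielding $L_j=X_j\oplus I_j$ with $X\cong\bigoplus_j X_j$ and $I\cong\bigoplus_j I_j$; the same $X_j$ are then attached to both sides, $K_j:=K'_j\oplus X_j$ and $C_j:=C'_j\oplus X_j$, so the ``coordination problem'' you devote your last paragraph to never arises, and the required isomorphisms are immediate: $K_j\oplus I_j=K'_j\oplus(X_j\oplus I_j)=K'_j\oplus L_j\cong R$, and likewise for $C_j$. By contrast, your setup produces $K_j\oplus I_j\cong(K'_j\oplus I_j)\oplus X_j\cong R\oplus X_j$, and the ``cancellation check'' you invoke to delete $X_j$ is exactly the cancellation that refinement rings do not have --- it is the substantive extra hypothesis of the paper's main diagonalization theorem, not a consequence of refinement. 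So the proposal does not prove the lemma as it stands; it must be restructured so that the refinement is performed on the middle term $X\oplus I$ rather than on $K'\oplus X$ and $C'\oplus X$ separately.
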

\begin{proof} The proof is similar to the proof of \cite[Proposition 2.3]{ara}.
\end{proof}
The following were obtained over the exchange rings and now we extend  them over refinement rings.
\begin{thm} Let $R$ be a refinement ring with this property that $2R\oplus A\cong R\oplus B$ implies that $R\oplus A\cong B$ for all finitely generated projective $R$-modules $A, B$ and $B$ is a generator. Then every regular square matrix over $R$ admits a diagonal reduction.
\end{thm}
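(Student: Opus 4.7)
The plan is to reduce, via the preceding proposition, to the purely module-theoretic task of constructing decompositions
\[
\ker f = \bigoplus_{j=1}^{n} K_j, \qquad \mathrm{im}\, f = \bigoplus_{j=1}^{n} I_j, \qquad \mathrm{coker}\, f = \bigoplus_{j=1}^{n} C_j,
\]
satisfying $K_j \oplus I_j \cong R \cong C_j \oplus I_j$ for every $j$, where $f$ is an arbitrary regular $n\times n$ matrix over $R$. Regularity of $f$ supplies the starting data $\ker f \oplus \mathrm{im}\, f \cong nR \cong \mathrm{im}\, f \oplus \mathrm{coker}\, f$.

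I would proceed by induction on $n$. The base case $n=1$ is immediate: the two isomorphisms provided by regularity are already decompositions of the required form with a single summand. For the inductive step, the aim is to peel off one compatible copy of $R$ from each of $\ker f$, $\mathrm{im}\, f$ and $\mathrm{coker}\, f$, leaving an $(n-1)\times(n-1)$ problem that the inductive hypothesis handles.

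The peeling is carried out by three successive applications of the refinement property of $V(R)$. The first is applied to the two decompositions $\ker f \oplus \mathrm{im}\, f \cong nR \cong R \oplus (n-1)R$, producing $\ker f = K_n \oplus K'$, $\mathrm{im}\, f = I_n \oplus I'$ with $R \cong K_n \oplus I_n$ and $(n-1)R \cong K' \oplus I'$. The second is applied analogously to $\mathrm{im}\, f \oplus \mathrm{coker}\, f \cong R \oplus (n-1)R$, producing $\mathrm{im}\, f = \tilde I_n \oplus \tilde I'$ and $\mathrm{coker}\, f = C_n \oplus C'$ with $R \cong \tilde I_n \oplus C_n$ and $(n-1)R \cong \tilde I' \oplus C'$. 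The third is applied to the two resulting decompositions $I_n \oplus I' \cong \mathrm{im}\, f \cong \tilde I_n \oplus \tilde I'$ to produce a common refinement. At this point I would invoke the hypothesis $2R \oplus A \cong R \oplus B \Rightarrow R \oplus A \cong B$ to merge the two $R$-summand fragments of $\mathrm{im}\, f$ into a single $I_n$ that simultaneously completes $K_n$ and $C_n$ to copies of $R$. The residual data then satisfy $K' \oplus I' \cong (n-1)R \cong I' \oplus C'$, and the preceding lemma converts this into genuine diagonal-reduction data for an $(n-1)\times(n-1)$ regular matrix, to which the inductive hypothesis applies.

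The step I expect to be the main obstacle is the generator side condition on the cancellation law: the module $B$ arising from the common refinement of $\mathrm{im}\, f$ need not a priori contain a visible copy of $R$, so the hypothesis does not directly apply. The fix should be to first absorb a redundant free summand into $B$—exactly the kind of manipulation legitimised by the preceding lemma—before invoking the cancellation. Once this bookkeeping is in place, the argument runs structurally parallel to the exchange-ring version in \cite{ara}, with the refinement property of $V(R)$ playing the role that the exchange condition plays there.
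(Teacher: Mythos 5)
Your overall strategy---induction on $n$, peeling one compatible rank-one piece off $\ker f$, $\mathrm{im}\,f$ and $\mathrm{coker}\,f$ at each step---is genuinely different from the paper's, and it has a gap that your sketch does not close. After your first two applications of refinement you hold two \emph{different} decompositions of $\mathrm{im}\,f$: one, $I_n\oplus I'$, aligned with $\ker f$ (so that $K_n\oplus I_n\cong R$ and $K'\oplus I'\cong (n-1)R$), and another, $\tilde I_n\oplus\tilde I'$, aligned with $\mathrm{coker}\,f$. Taking a common refinement $I_n\cong A\oplus B$, $I'\cong C\oplus D$, $\tilde I_n\cong A\oplus C$, $\tilde I'\cong B\oplus D$ gives four fragments, but no regrouping of them yields a single summand of $\mathrm{im}\,f$ that simultaneously completes a piece of $\ker f$ and a piece of $\mathrm{coker}\,f$ to $R$ \emph{while leaving residuals that still sum to $(n-1)R$}: for instance, taking $A$ as the new $I_n$ forces the stray fragments $B$ and $C$ into the rank-one complements, and the kernel-side residual becomes $(n-1)R\oplus B$ rather than $(n-1)R$. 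The cancellation hypothesis has the specific shape $2R\oplus A\cong R\oplus B\Rightarrow R\oplus A\cong B$ with $B$ a generator; in the merging situation none of the four fragments is known to be free or a generator, so the hypothesis does not engage. This alignment problem, not the generator side condition you flag, is the real obstruction, and "invoke the hypothesis to merge" is not an argument.

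The paper's proof avoids peeling altogether. It first applies refinement to $K\oplus I\cong I\oplus C$ and uses Lemma 3.3 to reduce to the case where $K$ is a direct summand of $I$, hence $I$ is a generator; it then applies the cancellation hypothesis $n-1$ times to $nR\oplus C\cong (n-1)R\oplus(R\oplus K)$ to obtain $R\oplus K\cong R\oplus C$; a further refinement plus Lemma 3.3 reduce to the case $E\oplus K\cong E\oplus C\cong R$ for some $E$; and one final cancellation on $nR\oplus E\cong R\oplus I$ gives $I\cong (n-1)R\oplus E$. The required diagonal refinement is then written down in one shot as $I\cong E\oplus R\oplus\cdots\oplus R$, $K\cong K\oplus 0\oplus\cdots\oplus 0$, $C\cong C\oplus 0\oplus\cdots\oplus 0$: all of $K$ and all of $C$ are concentrated in a single coordinate rather than distributed across the $n$ slots, which is precisely why the alignment problem never arises. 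To rescue your induction you would have to prove the purely module-theoretic statement for arbitrary finitely generated projectives $K,I,C$ with $K\oplus I\cong nR\cong I\oplus C$ and build the compatibility of the two refinements of $I$ into the inductive hypothesis; as written, the merge step is unproven.
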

\begin{proof} Let $f:nR\rightarrow nR$ be a regular matrix over $R$. Let $K=ker(f), I=im(f), C=coker(f)$. By Lemma 3.3
 we need only to prove that $K, I, C$ can be refined as $K=K_1\oplus......\oplus K_n, I=I_1\oplus....\oplus I_n, C=C_1\oplus....C_n$ such that $K_j\oplus I_j \cong I_j\oplus C_j\cong R, j=1,2,...n$.
 As $f$ is a regular homomorphism then by \cite[Lemma 14.1.1]{Chen}  $K\oplus I\cong I\oplus C\cong nR$.
 As $R$ is a refinement ring and $K\oplus I\cong I\oplus C$ for finitely generated $R$-modules $K, I, C$ then
 there exist $R-$ modules $K_1, K_2, I_1, I_2$ such that $ K\cong K_1\oplus K_2$ $I\cong I_1\oplus I_2,$ and
 $K_1\oplus I_1\cong I, K_2\oplus I_2\simeq C$. By Lemma 3.3, it suffices to find a diagonal refinement for the decompositions $nR\cong K_1\oplus (I\oplus K_2)\cong (I\oplus K_2) \oplus I_2$. Hence we can assume that $K$ is isomorphic to a direct summand of $I$ and $nR$ is isomorphic to a direct summand of $2I$ and therefore $I$ is a generator.\\
 Now we have from $nR\cong K\oplus I\cong I\oplus C\cong (n-1)R\oplus R$ that $nR\oplus C\cong K\oplus I\oplus C\cong (n-1)R\oplus (R\oplus K)$, with $R\oplus K$ a generator and by cancellation property of the assumption in (n-1) times we have $R\oplus K\cong R\oplus C$. As $R$ is a refinement ring and  $R, K, C$ are finitely generated projective $R$- modules then there exist finitely generated projective $R$-modules $R_1, R_2, C_1, C_2$ such that $R\cong R_1\oplus R_2,C_1\oplus C_2\cong C$ and $R_1\oplus C_1\cong R, R_2\oplus C_2\cong K$. By Proposition 3.2. we need only to find a diagonal refinement for the decompositions $R_2\oplus (I\oplus C_2)\cong (I\oplus C_2) \oplus C_1$. By these decompositions we can assume that there exists a finitely generated projective $R$-module $E$ such that $E\oplus K\cong E\oplus C\cong R$. Then we can write $nR\oplus E\cong 2R\oplus (n-2)R\oplus E\cong K\oplus I\oplus E\cong R\oplus I$. Since $I$ is a generator and by hypothesis we can cancel $R$ and get $(n-1)R\oplus E\cong I$. So we have \\
 $I\cong E\oplus R\oplus R\oplus R.....\oplus R$,  $K\cong K\oplus 0\oplus 0\oplus 0.....\oplus 0$,   $C\cong C\oplus 0\oplus 0\oplus 0...\oplus0$. Hence we get the result by Proposition 3.2.
\end{proof}
In the next theorem we want to investigate the diagonalizability
of regular matrices by the following cancellative property that is
proved by Ara \cite{ara} over exchange ring and can be extended to
refinement ring.
\begin{thm}
Let $R$ be a refinement ring. Then every $m\times n$ regular matrix over $R$ admits a diagonal reduction if and only if $2R\oplus A\cong R\oplus B$ implies that $R\oplus A\cong B$ for all finitely generated projective $R$-modules $A, B$.
\end{thm}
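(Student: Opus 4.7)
The plan is to adapt the exchange-ring proof of \cite[Theorem 2.4]{ara} to the refinement-ring setting, using Proposition 3.2, Lemma 3.3, and Theorem 3.4 as the main tools.

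For the implication ``cancellation $\Rightarrow$ every regular matrix diagonalizes'': the square case follows at once from Theorem 3.4, since the full cancellation hypothesis is strictly stronger than the ``$B$ a generator'' hypothesis used there. To pass to a rectangular regular $f:nR\to mR$ with $n\leq m$, I would form the square matrix $\tilde f:mR\to mR$ given by $\tilde f(x,y)=f(x)$ for $x\in nR$, $y\in(m-n)R$. This $\tilde f$ is still regular (a reflexive inverse is built from one for $f$ composed with the projection $mR\to nR$), so by the square case it admits a diagonal reduction. By Proposition 3.2, $\ker\tilde f=\ker f\oplus(m-n)R$, $\operatorname{im}\tilde f=\operatorname{im}f$, and $\operatorname{coker}\tilde f=\operatorname{coker}f$ admit suitable decompositions; the refinement property of $V(R)$ then lets me redistribute summands so that the criterion of Proposition 3.2(2) is met for $f$ itself. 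The case $n\geq m$ is handled symmetrically by padding the domain instead of the codomain.

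For the converse ``every regular matrix diagonalizes $\Rightarrow$ cancellation'': given an isomorphism $\phi:2R\oplus A\cong R\oplus B$, choose $B'$ with $B\oplus B'\cong bR$ and define $\hat f:2R\to(b+1)R$ as the composition $2R\hookrightarrow 2R\oplus A\xrightarrow{\phi}R\oplus B\hookrightarrow R\oplus B\oplus B'=(b+1)R$. Then $\hat f$ is a regular monomorphism with $\ker\hat f=0$, $\operatorname{im}\hat f\cong 2R$, and $\operatorname{coker}\hat f\cong A\oplus B'$, a reflexive inverse being obtained from $\phi^{-1}$ composed with the projection $2R\oplus A\to 2R$. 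By hypothesis $\hat f$ admits a diagonal reduction, so Proposition 3.2(2) yields decompositions of $\operatorname{im}\hat f$ and $\operatorname{coker}\hat f$ whose summands are either isomorphic to $R$ or stably isomorphic to $0$. Combined with a relation $A\oplus A'\cong aR$ for some $A'$, and with the refinement property of $V(R)$ applied to align summands between $B\oplus A'$ and the decomposition of $\operatorname{coker}\hat f$, these data should force $B\oplus A'\cong(a+1)R$, equivalently $R\oplus A\cong B$.

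The main obstacle is the second implication: one must extract the clean isomorphism $R\oplus A\cong B$ from the mixed data produced by Proposition 3.2, in which several summands are only known to be stably zero. The refinement property of $V(R)$ substitutes for the exchange property used in \cite{ara}, and must be applied carefully so that the various direct-sum decompositions align without inadvertently invoking the very cancellation that is being proved.
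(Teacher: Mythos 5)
The first thing to note is that the paper does not actually prove this theorem: it only remarks beforehand that the statement ``is proved by Ara \cite{ara} over exchange rings and can be extended to refinement rings,'' so your attempt has to be measured against \cite[Theorem 2.4]{ara} rather than against any argument in the text. Your forward direction is essentially sound for square matrices, since the full cancellation hypothesis implies the restricted (``$B$ a generator'') hypothesis of Theorem 3.4. The padding step for rectangular matrices, however, is not free: a diagonal refinement of $\ker f\oplus(m-n)R$ obtained from the square case need not restrict to a diagonal refinement of $\ker f$, because the summands $K_j$ may mix the two pieces, and ``redistributing by the refinement property'' is exactly the content of an argument like Lemma 3.3 run again, not an automatic consequence of it. In \cite{ara} the rectangular case is argued directly from the criterion of Proposition 3.2, repeating the cancellation argument with the extra free summands carried along explicitly; that route avoids the redistribution problem entirely.

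The converse is where the proposal genuinely breaks, and the obstacle you flag at the end is not a technicality but the whole content of that implication. Apply Proposition 3.2 to your $\hat f:2R\to(b+1)R$: since $\ker\hat f=0$, the criterion forces $I_1\cong I_2\cong R$, hence $C_1\oplus R\cong C_2\oplus R\cong R$ and $C_j\cong R$ for $j\geq 3$, so all it yields is $A\oplus B'\cong C_1\oplus C_2\oplus(b-1)R$ with $C_1,C_2$ stably zero. The best consequence is $(R\oplus A)\oplus B'\cong bR\cong B\oplus B'$, and passing from this to $R\oplus A\cong B$ requires cancelling the finitely generated projective module $B'$ --- precisely the kind of cancellation being proved, so the argument is circular. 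The defect is structural: $B$ itself occurs in none of $\ker\hat f$, $\operatorname{im}\hat f$, $\operatorname{coker}\hat f$ (it enters only through the choice of $B'$ with $B\oplus B'\cong bR$), so no refinement of these three modules can recover the isomorphism class of $B$ beyond its stable class. One needs a different regular matrix, whose image or cokernel is built from $B$ and $A$ non-freely, so that the conditions $K_j\oplus I_j\cong C_j\oplus I_j\cong R$ carry genuine information; supplying that construction is the missing step.
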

As we see in many examples the diagonalizability property can not lift from $\frac{R}{J(R)}$ to $R$ but we show in the next Proposition that it is possible for regular matrices.
\begin{prop}
Let $R$ be a refinement ring such that every $m\times n$ regular matrix over $\frac{R}{J(R)}$ admits a diagonal reduction. Then every regular matrix over $R$ admits a diagonal reduction.
\end{prop}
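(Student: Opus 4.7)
The strategy is to reduce the claim to the cancellation criterion from the preceding theorem, which states that for a refinement ring $R$, every regular matrix over $R$ admits a diagonal reduction if and only if $2R \oplus A \cong R \oplus B$ implies $R \oplus A \cong B$ for all finitely generated projective $R$-modules $A, B$. Since $R$ is a refinement ring, it suffices to establish this cancellation property for $R$; I plan to obtain it by descending to $\bar R = R/J(R)$, using the hypothesis there, and lifting back.

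Suppose $2R \oplus A \cong R \oplus B$. Set $\bar M = M/MJ(R)$; since $A, B$ are finitely generated projective over $R$, the modules $\bar A, \bar B$ remain finitely generated projective over $\bar R$, and reduction modulo $J(R)$ yields $2\bar R \oplus \bar A \cong \bar R \oplus \bar B$ over $\bar R$. By hypothesis, every regular matrix over $\bar R$ admits a diagonal reduction. The ``diagonalizability $\Rightarrow$ cancellation'' direction of the preceding theorem proceeds by constructing an explicit regular matrix from the given stable isomorphism and reading off the cancellation from its diagonal form; this construction uses only module-theoretic manipulations and embeddings of the $A, B$ into free modules, and does not invoke the refinement hypothesis on the ambient ring. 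Applying it to $\bar R$ produces $\bar R \oplus \bar A \cong \bar B$.

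The remaining task is to lift this isomorphism back to $R$. The general lifting fact I would invoke is: for finitely generated projective $R$-modules $P, Q$, any isomorphism $\bar P \cong \bar Q$ over $\bar R$ lifts to an isomorphism $P \cong Q$ over $R$. The argument lifts the given $\bar\phi$ and $\bar\phi^{-1}$ to morphisms $\phi : P \to Q$ and $\psi : Q \to P$; then $\psi\phi - \mathrm{id}_P$ and $\phi\psi - \mathrm{id}_Q$ lie in the kernels of $\mathrm{End}_R(P) \to \mathrm{End}_{\bar R}(\bar P)$ and $\mathrm{End}_R(Q) \to \mathrm{End}_{\bar R}(\bar Q)$ respectively, which are contained in $J(\mathrm{End}_R(P))$ and $J(\mathrm{End}_R(Q))$, so $\psi\phi$ and $\phi\psi$ are units and $\phi$ is an isomorphism. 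Specializing $P = R \oplus A$ and $Q = B$ yields $R \oplus A \cong B$, and the preceding theorem closes the argument.

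The principal obstacle is the descent step: I cannot appeal to the biconditional of the preceding theorem over $\bar R$ as a black box, because $\bar R$ is not asserted to be a refinement ring and in general the refinement property of $V(R)$ need not descend to $V(\bar R)$ (since idempotents need not lift modulo $J(R)$, the map $V(R) \to V(\bar R)$ need not be surjective). This forces me to inspect the proof of the preceding theorem and verify that its ``diagonalizability $\Rightarrow$ cancellation'' direction is proved by a direct matrix construction that makes no use of the refinement hypothesis. Once that inspection is complete the rest of the proof is routine lifting.
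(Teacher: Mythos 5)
Your proposal follows essentially the same route as the paper: reduce the stable isomorphism $2R\oplus A\cong R\oplus B$ modulo $J(R)$, apply the cancellation criterion of Theorem 3.5 over $R/J(R)$ to get $\bar R\oplus\bar A\cong\bar B$, lift this isomorphism of finitely generated projective modules back to $R$, and invoke Theorem 3.5 again over the refinement ring $R$. The one point where you are more careful than the paper --- observing that $R/J(R)$ is not known to be a refinement ring, so one may only use the ``diagonalizability implies cancellation'' direction of Theorem 3.5 over $R/J(R)$ after checking that this direction does not need the refinement hypothesis --- flags a genuine subtlety that the paper's own proof silently glosses over.
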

\begin{proof}
Let $2R\oplus A\cong R\oplus B$ for finitely generated projective $R$-modules $A, B$. Then we have
 $$  2\frac{R}{J(R)} \oplus \frac{A}{RJ(A)} \cong \frac{2R\oplus A}{J(2R\oplus A)}\cong \frac{R\oplus B}{J(R\oplus B)} \cong \frac{R}{J(R)}\oplus \frac{B}{RJ(B)}.$$
 Since every $m\times n$ regular matrix over $\frac{R}{J(R)}$ admits a diagonal reduction then by Theorem 3.5
  we deduce that $$\frac{R}{J(R)}\oplus \frac{A}{RJ(A)} \cong \frac{B}{RJ(B)}.$$ Then we have,
  $$\frac{R \oplus A}{J(R\oplus A)}\cong \frac{B}{RJ(B)}\Longrightarrow R\oplus A\cong B.$$ Since $R, A, B$ are finitely generated projective $R$-modules. Therefore by Theorem 3.5. every $m\times n$ regular matrix over $R$ admits a diagonal reduction.
\end{proof}
\begin{cor} A regular ring $R$ is an elementary divisor ring if and only if $\frac{R}{J(R)}$ is an elementary divisor ring.
\end{cor}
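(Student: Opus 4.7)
The proof would proceed as a direct corollary of Proposition 3.6 together with two standard facts about von Neumann regular rings. First, every regular ring is an exchange ring, hence a refinement ring, so Proposition 3.6 is applicable to $R$. Second, over a regular ring every matrix has an inner inverse (since $M_n(R)$ is itself regular); the same applies to the quotient $R/J(R)$, which is again regular. Thus the qualifier ``regular'' in the hypothesis of Proposition 3.6 is automatic here, and the statement ``every regular matrix admits a diagonal reduction'' coincides with ``every matrix admits a diagonal reduction'' both over $R$ and over $R/J(R)$.

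Using these facts, Proposition 3.6 gives that every matrix over $R$ admits a diagonal reduction iff every matrix over $R/J(R)$ admits one. (The backward direction is exactly Proposition 3.6, and the forward direction is immediate by reducing a diagonalisation of $A \in M_{m\times n}(R)$ modulo $J(R)$.) The remaining content is to match ``every matrix is diagonalisable'' with ``elementary divisor ring'', i.e.\ to handle the total divisor condition $d_{i+1}Rd_{i+1}\subseteq d_iR\cap Rd_i$. The forward direction of this is routine: this inclusion is preserved by any ring homomorphism, so the elementary divisor property descends from $R$ to $R/J(R)$. For the converse, I would use that over a regular ring every principal one-sided ideal is generated by an idempotent, so the diagonal entries of a reduction may be replaced by idempotents; the total divisor condition then reduces to arranging a descending chain of these idempotents, which can be carried out by appealing to Theorem 3.5 and the cancellation property $2R\oplus A\cong R\oplus B \Rightarrow R\oplus A\cong B$.

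The main obstacle is this last step---upgrading ``every matrix admits a diagonal reduction'' to the full elementary divisor property in the regular setting, because the total divisor condition is more than bare diagonalisation. Since this upgrade is a known bridge (Henriksen \cite{hen} for the unit-regular case, and \cite{ara} more generally in the refinement/exchange setting), I would invoke it rather than redo the idempotent bookkeeping, and the corollary then follows at once from Proposition 3.6.
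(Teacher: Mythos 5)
Your proposal follows essentially the same route as the paper: apply Proposition 3.6, observe that over a (von Neumann) regular ring every matrix is regular so the qualifier is vacuous, and handle the forward direction by passing to the quotient (the paper phrases this as homomorphic images of elementary divisor rings being elementary divisor rings). The only difference is that you explicitly address the total-divisor condition in the definition of elementary divisor ring, a point the paper's proof silently identifies with bare diagonalisability; your instinct to invoke the known bridge from \cite{hen} and \cite{ara} there is, if anything, more careful than the paper's own argument.
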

\begin{proof}
As every homomorphic image of an elementary divisor ring is an elementary divisor ring, then one direction is obvious. Conversely assume that $R$ is a regular ring and $\frac{R}{J(R)}$ is an elementary divisor ring, by Proposition 3.6. every $m\times n$ regular matrix over $R$ admits a diagonal reduction, but since $R$ is regular it is obvious that every matrix over $R$ is regular. Then $R$ is an elementary divisor ring.
\end{proof}

\end{document}